\documentclass[12pt,a4paper]{article}
\pdfoutput=1 
\usepackage[a4paper,textwidth=160mm,textheight=230mm]{geometry}
\usepackage{amsmath,amssymb,amsthm}
\usepackage{mathtools}
\usepackage{bbm,latexsym}
\usepackage{graphicx} 
\usepackage{rotating} 
\usepackage[numbers,sort&compress]{natbib} 
\usepackage{authblk}
\usepackage{slashed} 
\usepackage{xcolor} 
\usepackage{makeidx} 
\usepackage{blkarray} 
\usepackage{listings}
%
%
\setlength{\arrayrulewidth}{0.2mm}
\setlength{\columnsep}{5mm}
\newtheorem{lemma}{Lemma}
\newtheorem{definition}{Definition}
\newtheorem{theorem}{Theorem}

\usepackage{xcolor}

\definecolor{codegreen}{rgb}{0,0.6,0}
\definecolor{codegray}{rgb}{0.5,0.5,0.5}
\definecolor{codepurple}{rgb}{0.58,0,0.82}
\definecolor{backcolour}{rgb}{0.95,0.95,0.92}

\lstdefinestyle{mystyle}{
    commentstyle=\color{codegreen},
    keywordstyle=\color{magenta},
    numberstyle=\tiny\color{codegray},
    stringstyle=\color{codepurple},
    basicstyle=\ttfamily\scriptsize,
    breakatwhitespace=false,         
    breaklines=true,                 
    captionpos=b,                    
    keepspaces=true,                 
    numbers=left,                    
    numbersep=5pt,                  
    showspaces=false,                
    showstringspaces=false,
    showtabs=false,                  
    tabsize=4
}

\lstset{style=mystyle}

\allowdisplaybreaks

\begin{document}
\title{
\LARGE A normal form for bases of\\finite-dimensional vector spaces}
\setcounter{footnote}{2}
\author{Patrick Otto Ludl\thanks{E-mail: patrick.otto.ludl@itwm.fraunhofer.de}\\
{\small Fraunhofer Institute for Industrial Mathematics ITWM\\Fraunhofer-Platz 1, 67663 Kaiserslautern, Germany}}

\date{June 17, 2023}

\maketitle


\abstract{Most algorithms constructing bases of finite-dimensional vector spaces
return basis vectors which, apart from orthogonality, do not
show any special properties. While every basis is sufficient to define the vector space,
not all bases are equally suited to unravel properties of the
problem to be solved.
In this paper a normal form for bases of finite-dimensional
vector spaces is introduced which may prove very useful in the context
of understanding the structure of the problem in which the basis appears
in a step towards the solution. This normal form may be viewed as a new normal form
for matrices of full column rank.}

\section{Motivation}

Consider the following simple mathematical problem: Find a basis of the four-dimensional vector space $V \subset W \simeq \mathbbm{R}^5$
orthogonal to the vector $a = (1, 1, 1, 1, 1)^\text{T} \in W$. Numerically, an answer to this problem can be found
by computing a basis of the kernel (nullspace) of $a^\text{T}$. For example, Octave~\cite{octave}, using the command \verb+null+,
returns the following basis, denoted as columns of a matrix:
\begin{equation}\label{example_basis1}
 \left(
 \begin{smallmatrix*}[r]
  -0.44721 & -0.44721 & -0.44721 & -0.44721\\
   0.86180 & -0.13820 & -0.13820 & -0.13820\\
  -0.13820 &  0.86180 & -0.13820 & -0.13820\\
  -0.13820 & -0.13820 &  0.86180 & -0.13820\\
  -0.13820 & -0.13820 & -0.13820 &  0.86180
 \end{smallmatrix*}
 \right).
\end{equation}
To numerical accuracy this basis is a correct solution. Moreover, it is an orthogonal basis.
It is, however, not the answer a human would give. A human may give an answer like
\begin{equation}\label{example_basis2}
 \left(
 \begin{smallmatrix*}[r]
 1 & 0 & 0 & 0 \\
 -1 & 1 & 0 & 0 \\
 0 & -1 & 1 & 0 \\
 0 & 0 & -1 & 1\\
 0 & 0 & 0 & -1
 \end{smallmatrix*}
 \right).
\end{equation}
While the two bases of Eqs.~(\ref{example_basis1}) and~(\ref{example_basis2}) span the same vector space,
the second one appears to be simpler and more suited for the human mind to grasp the properties
of the solution. The basis in Eq.~(\ref{example_basis2}) has the following properties:
\begin{enumerate}
 \item Orthogonality to $a$ is evident without any computational aid (pen and paper, computer).
 \item Any basis vector can be characterised by two non-zero entries.\label{item_two_non_zero}
 \item Having seen the solution in this form, the solution can immediately be generalised to
the corresponding problem in $\mathbbm{R}^n$.
\end{enumerate}
Point~\ref{item_two_non_zero} above may be of crucial importance given the fact that in
the context of a scientific or technical application the
elements of the involved vector spaces
may have a \textit{specific physical meaning}. At this point it is important to remember
that the numerical notation of the vectors above is shorthand for
\begin{equation}
a = \left(
 \begin{smallmatrix*}[r]
 a_1 \\
 a_2 \\
 a_3 \\
 a_4 \\
 a_5
 \end{smallmatrix*}
 \right) \equiv \sum_{i=1}^5 a_i w_i,
\end{equation}
where the $w_i$ are \textit{fixed} basis vectors of $W$ chosen to describe the problem in an appropriate way.
Each of the $w_i$ may in general describe an individual physical quantity. For example, the $w_i$ could represent
measurements from five different devices.\footnote{The $w_i$ could even be functions, for example
linearly independent solutions of a homogeneous differential equation.}
If this is the case, each basis vector in Eq.~(\ref{example_basis2}) involves a \textit{minimal} number (two in the shown example)
of quantities, while the basis vectors of Eq.~(\ref{example_basis1}) combine all involved quantities in each basis vector.

The situation exemplified above may be faced in any situation which involves the
solution of a linear equation
\begin{equation}\label{__motiv_lin_eq}
M x = y,
\end{equation}
where $M\in\mathbbm{R}^{m\times n}$ is an $m\times n$-matrix of full row rank $m$ ($m<n$)
and $x\in \mathbbm{R}^n$, $y\in \mathbbm{R}^m$ are column vectors.
The general solution of this linear underdetermined system is given by
\begin{equation}\label{__motiv_sol}
x = x_p + \sum_{i=1}^{n-m} \lambda_i k_i,
\end{equation}
where $x_p$ is a particular solution of Eq.~(\ref{__motiv_lin_eq}),
$k_1,\ldots, k_{n-m} \in \mathbbm{R}^n$ form a basis of the kernel
$\mathrm{ker}\,M$ and $\lambda_1,\ldots,\lambda_{n-m} \in \mathbbm{R}$ are
free coefficients.
The choice of basis of $\mathrm{ker}\,M$ is not restricted, and
regarding the parameterisation of the solution~(\ref{__motiv_sol}) each
basis is equivalent. Since, however, $\mathrm{ker}\,M \subset \mathbbm{R}^n$
is \textit{embedded} into an $n$-dimensional vector space where
each of the $n$ directions may have a different \textit{physical meaning},
some bases of $\mathrm{ker}\,M$ may be more expressive or suited to the
problem than others.

The outline of this paper is as follows. In Section~\ref{sec_normal_form} a normal form for bases of
finite-dimensional vector spaces is defined. The practical use of this normal form is demonstrated by sample applications in
Section~\ref{sec_applications}. Finally, Section~\ref{sec_conclusions} concludes the paper.

\section{Definition of a normal form for bases of finite-dimensional vector spaces}\label{sec_normal_form}

In the following, a definition of a normal form for a basis of a finite-dimensional
vector space $V \subseteq \mathbbm{F}^m$ with $\mathbbm{F}=\mathbbm{R}$ or $\mathbbm{C}$ is given.
It has been developed with the following goals in mind:
\begin{itemize}
 \item Existence for any vector space $V \subseteq \mathbbm{F}^m$ of dimension $n=\mathrm{dim}\,V \geq 2$.
 \item Uniqueness.
 \item A high number of zero entries in the basis vectors.
 \item If $\mathrm{dim} V = m$, the basis in normal form shall consist of the columns of the $m\times m$ unit matrix.
\end{itemize}

\paragraph{Given:} An $m \times n$-matrix
\begin{equation}\label{eq_columns_of_A}
 A = \begin{pmatrix*}
 c_1,\ldots,c_n
\end{pmatrix*} =
\begin{pmatrix*}
r_1\\
\vdots\\
r_m
\end{pmatrix*} \in \mathbbm{F}^{m\times n}
\end{equation}
of full column rank $n\leq m$. The columns $c_1,\ldots,c_n$ represent the basis vectors
of the $n$-dimensional vector space $V\subseteq \mathbbm{F}^m$ for which a basis in normal form shall be constructed.
To avoid trivial cases, $n\geq 2$ is assumed.

Let $R=\{r_1,\ldots,r_m\}$ be the set of rows of $A$.
Consider now all selections $S \subset R$ of rows s.t.\ the selected rows span a vector space of
dimension $n-1$, i.e.\
\begin{equation}\label{eq_dim_span_S}
 \mathrm{dim}\;\mathrm{span} (S) = n-1.
\end{equation}
Each of the vector spaces $\mathrm{span}(S) \subset \mathbbm{F}^n$, geometrically is
an $(n-1)$-di\-men\-sio\-nal hyperplane
through $0$ which contains the selected row vectors. Let $s\neq 0$ be a normal vector of
this hyperplane,\footnote{The definition of orthogonality
in vector spaces isomorphic to $\mathbbm{C}^n$ usually is based
on the scalar product $\langle r\vert s\rangle \equiv r^\dagger s$. Nevertheless, here
$r \cdot s = 0$ is used as the defining property of a ``normal'' vector $s$ also
in case of a complex vector space.} \textit{i.e.}\ $r \cdot s = 0\; \forall r\in S$.
Then $As \in \mathbbm{F}^m$ contains at least $|S|$ zero entries.
Since $s\neq 0$ and $A$ has full column rank, $As$ has at least one non-vanishing entry.
Therefore, the normal vector $s$ can be made unique by requiring that the first non-vanishing element of $As$
shall be $+1$. In the following, the normal vectors $\hat{s}$ normalised s.t.\ they fulfil this requirement
are denoted with a hat. In this way, each selection $S$ is assigned a unique vector $\hat{s}\in \mathbbm{F}^n$.
\begin{lemma}\label{lemma_1}
The set of all normal vectors $\hat{s}\in \mathbbm{F}^n$ to all selections $S$ is a generating set of $\mathbbm{F}^n$.
\end{lemma}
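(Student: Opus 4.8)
The plan is to construct explicitly $n$ normal vectors among the $\hat{s}$ that are linearly independent. Since they live in $\mathbbm{F}^n$, they then already form a basis, and the full collection of all normal vectors \emph{a fortiori} generates $\mathbbm{F}^n$. First I would invoke the full column rank hypothesis: because $\mathrm{rank}\,A = n$, the row rank equals $n$ as well, so the rows $r_1,\ldots,r_m$ span $\mathbbm{F}^n$. Hence I can pick $n$ linearly independent rows and, after relabelling, assume these are $r_1,\ldots,r_n$; collect them as the rows of an invertible matrix $B \in \mathbbm{F}^{n\times n}$.

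Next, for each $i \in \{1,\ldots,n\}$ I would take the selection $S_i = \{r_1,\ldots,r_n\}\setminus\{r_i\}$. It consists of $n-1$ linearly independent rows, so $\mathrm{dim}\,\mathrm{span}(S_i) = n-1$ and $S_i$ is an admissible selection with an associated normal vector $s_i \neq 0$ satisfying $r_j \cdot s_i = 0$ for all $j \neq i$. The key point is that $r_i \cdot s_i \neq 0$: otherwise $s_i$ would be orthogonal (with respect to the bilinear product $r\cdot s$ used above) to all of $r_1,\ldots,r_n$, a spanning set, forcing $s_i = 0$. Therefore $B s_i = (r_i\cdot s_i)\,e_i$ is a nonzero multiple of the $i$-th coordinate vector, and $s_i = (r_i\cdot s_i)\,B^{-1}e_i$ is a nonzero scalar multiple of the $i$-th column of $B^{-1}$. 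As $B^{-1}$ is invertible, its columns are linearly independent, so $s_1,\ldots,s_n$ are linearly independent; passing to the hatted normalisation merely rescales each by a further nonzero scalar and preserves this.

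Thus $\hat{s}_1,\ldots,\hat{s}_n$ are $n$ linearly independent vectors in $\mathbbm{F}^n$ and hence span it, which proves the claim. I expect the two points requiring care to be the verification that $r_i\cdot s_i \neq 0$ --- this is exactly what prevents the construction from degenerating --- and the check that the orthogonal complement of an $(n-1)$-dimensional $\mathrm{span}(S_i)$ remains one-dimensional over $\mathbbm{C}$ when the symmetric bilinear form is used in place of the Hermitian one; a rank--nullity count gives $\mathrm{dim}\,\{s : r\cdot s = 0\ \forall r\in S_i\} = n-(n-1) = 1$ in either field, so $s_i$ is well defined up to scale.
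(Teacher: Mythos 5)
Your proof is correct and follows essentially the same route as the paper's: select $n$ linearly independent rows, form the selections omitting one row at a time, and identify the resulting normal vectors (up to nonzero scaling) with the columns of the inverse of the invertible $n\times n$ row matrix, which are linearly independent. In fact you make explicit two points the paper leaves implicit---that $r_i\cdot s_i\neq 0$, and that the bilinear-orthogonal complement of $\mathrm{span}(S_i)$ is one-dimensional over $\mathbbm{C}$ as well---so your write-up is, if anything, slightly more complete.
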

\begin{proof}
This can be proven by explicitly constructing $n$ linearly independent vectors $\hat{s}$.
Since $A$ has full column rank $n$, one can select $n$ linearly independent rows $\rho_1,\ldots, \rho_n$ of
$A$. From these, one can form the $n$ different selections
\begin{equation}
S_i = \{\rho_1,\ldots, \rho_{i-1}, \rho_{i+1},\ldots ,\rho_n\}.
\end{equation}
Each of the $S_i$ spans an $(n-1)$-dimensional vector space. The $i$-th column of
\begin{equation}
\rho^{-1} \equiv \begin{pmatrix*}
\rho_1\\
\vdots\\
\rho_n
\end{pmatrix*}^{-1}
\end{equation}
is a normal vector of $\mathrm{span}(S_i)$. Therefore, the $n$ linearly independent columns
of $\rho^{-1}$ provide $n$ linearly independent vectors $s$. Normalisation of these
yields $n$ linearly independent vectors $\hat{s}$.
\end{proof}
The vectors $\hat{s}$ will be used to define the normal form.
The main idea of this definition is to \textit{order} all possible
$\hat{s}$ w.r.t.\ the number of zero entries they generate in $A\hat{s}$.
For the same number of zeros, those $A\hat{s}$ shall be preferred for
which the zero entries have higher indices.
For example,
\begin{equation}
\begin{pmatrix*}
0\\
0\\
1\\
\times
\end{pmatrix*}
\quad
\text{shall be preferred over}
\quad
\begin{pmatrix*}
1\\
\times\\
\times\\
0
\end{pmatrix*}
\end{equation}
and
\begin{equation}
\begin{pmatrix*}
1\\
\times\\
0\\
\times
\end{pmatrix*}
\quad
\text{over}
\quad
\begin{pmatrix*}
0\\
1\\
\times\\
\times
\end{pmatrix*}.
\end{equation}
The symbol $\times$ here denotes an arbitrary non-zero entry.
This ordering will be formally defined by assigning each vector $\hat{s}$
an integer number.
\begin{definition}\label{def_1}
Let $\hat{s}$ be the normal vector to the hyperplane $\mathrm{span}(S)$,
then define
\begin{equation}
\theta^{(1)}(\hat{s}) \equiv 2^{m+\sum_{j=1}^m \theta_j},
\end{equation}
\begin{equation}
\theta^{(2)}(\hat{s}) \equiv \sum_{j=1}^m \theta_j 2^{j-1},
\end{equation}
\text{where}
\begin{equation}
\theta_j = \begin{cases}
1 & \text{for $(A\hat{s})_j =0$} \\
0 & \text{else}.
\end{cases}
\end{equation}
\end{definition}
The first term $\theta^{(1)}(\hat{s})$ is larger than the maximal value $\theta^{(2)}$
can assume (which is $2^m-2$ if all entries of $A\hat{s}$ except the first one are zero)
and encodes the total number of zeros in $A\hat{s}$. The second term describes the
preference for zero entries with higher indices.
Therefore, the number
\begin{equation}
\theta(\hat{s}) \equiv \theta^{(1)}(\hat{s}) + \theta^{(2)}(\hat{s})
\end{equation}
is consistent with the ordering for $\hat{s}$ sketched above.
Note that $\theta(\hat{s})$ equals the binary number $(10\ldots0\theta_m \theta_{m-1} \theta_{m-2} \cdots \theta_2 \theta_1)_2$,
where the first digit 1 encodes $\theta^{(1)}(\hat{s})$.
The following properties of $\theta(\hat{s})$ are important:
\begin{itemize}
 \item The encoding of the information
which elements of $A\hat{s}$ vanish as the number $\theta(\hat{s})$ is bijective.
 \item Different $\hat{s}$ correspond to different hyperplanes and thus to
different sets of vanishing elements of $A\hat{s}$. Since the encoding
is bijective, this means that \textit{different $\hat{s}$ are assigned different $\theta(\hat{s})$.}
 \item By definition of $S$, the set of rows orthogonal to $\hat{s}$ contains $n-1$ linearly independent vectors,
and thus unambiguously defines $\hat{s}$.
But two different selections $S_1$, $S_2$ of rows of $A$ 
can correspond to the same $\hat{s}$ and thus to the same $\theta(\hat{s})$.
This is the case if and only if
\begin{equation}
\mathrm{span}(S_1) = \mathrm{span}(S_2).
\end{equation}
A simple example
illustrating this is the matrix
\begin{equation}
A=\left(\begin{smallmatrix*}
r_1\\r_2\\r_3
\end{smallmatrix*}\right)=\left(\begin{smallmatrix*}
1 & 0\\
0 & 1\\
2 & 0
\end{smallmatrix*}\right)
\end{equation}
with the three possible selections $S_1=\{r_1\}$, $S_2=\{r_2\}$, $S_3=\{r_3\}$.
Though $S_1$ and $S_3$ are different, they correspond to the same $\hat{s}\equiv(0,\,1)^\mathrm{T}$. The
$\hat{s}$ of $S_2$ is $\hat{s}^\prime\equiv(1,\,0)^\mathrm{T}$. From
\begin{equation}
A\hat{s}=\left(\begin{smallmatrix*}
0\\1\\0
\end{smallmatrix*}\right) \text{ and } A\hat{s}^\prime=\left(\begin{smallmatrix*}
1\\0\\2
\end{smallmatrix*}\right)
\end{equation}
one finds
\begin{equation}
\theta(\hat{s})=2^{3+2}+2^0+2^2=37 \text{ and } \theta(\hat{s}^\prime)=2^{3+1}+2^1=18.
\end{equation}
\end{itemize}
Since different $\hat{s}$ are assigned different $\theta(\hat{s})$, the numbers can be used to order the
set of all normal vectors $\hat{s}$ constructed from all selections $S$.
\begin{definition}\label{definition_normal_form}
Let $\mathcal{S}$ be the set of all different normal vectors $\hat{s}$ to all selections $S \subset R$.
Then define
\begin{subequations}\label{eq_def_normalform}
\begin{align}
& \hat{s}_1 \equiv \underset{\hat{s}\in\mathcal{S}}{\mathrm{argmax}} \,\theta(\hat{s}),\\
& \hat{s}_j \equiv \underset{\hat{s}\in\mathcal{S}_j}{\mathrm{argmax}} \,\theta(\hat{s}),\; j\in \{2,\ldots,n\},
\end{align}
\end{subequations}
where
\begin{equation}
\mathcal{S}_j \equiv \{ \hat{s} \in \mathcal{S} \vert \text{$\hat{s}$ linearly independent of $\hat{s}_1,\ldots, \hat{s}_{j-1}$} \}.
\end{equation}
The normal form of the basis specified by the columns $\{c_1,\ldots,c_n\}$ of $A$ is defined as
the set of column vectors
\begin{equation}
\{ A\hat{s}_1,\ldots A\hat{s}_n\}.
\end{equation}
\end{definition}
\begin{theorem}\label{theorem_1}
The normal form exists and is unique for all matrices $A$ of Eq.~(\ref{eq_columns_of_A}), i.e.\ all vector spaces
$V\subset \mathbbm{F}^m$ of dimension $n \geq 2$. If $n = m$, i.e.\ if $A$ is a square matrix, the normal form of $A$
is given by the $m\times m$ unit matrix.
\end{theorem}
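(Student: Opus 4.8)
The plan is to handle existence, uniqueness, and the square case in turn, leaning on Lemma~\ref{lemma_1} and on the injectivity of $\theta$ noted just before Definition~\ref{definition_normal_form}. For existence I would first record that $\mathcal{S}$ is finite, since there are only finitely many subsets $S\subset R$ and each admissible selection determines a single $\hat{s}$. The crux is to show the greedy procedure of Definition~\ref{definition_normal_form} never stalls: setting $\mathcal{S}_1\equiv\mathcal{S}$, I would verify $\mathcal{S}_j\neq\emptyset$ for every $j\in\{1,\ldots,n\}$. This is precisely where Lemma~\ref{lemma_1} is used---$\mathcal{S}$ generates $\mathbbm{F}^n$, whereas the already-chosen $\hat{s}_1,\ldots,\hat{s}_{j-1}$ span a proper subspace (their count $j-1$ being strictly less than $n$), so some $\hat{s}\in\mathcal{S}$ lies outside that subspace and hence in $\mathcal{S}_j$. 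As each $\mathcal{S}_j$ is then a nonempty finite set on which $\theta$ takes integer values, a maximiser exists and $\hat{s}_j$ is defined. The resulting $\hat{s}_1,\ldots,\hat{s}_n$ are linearly independent by construction, and since $A$ has full column rank the images $A\hat{s}_1,\ldots,A\hat{s}_n$ remain independent; being $n$ vectors in the $n$-dimensional space $V$, they form a basis.

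For uniqueness I would invoke the already-established fact that distinct $\hat{s}\in\mathcal{S}$ carry distinct values $\theta(\hat{s})$. Hence $\theta$ restricted to any nonempty $\mathcal{S}_j$ is injective, its maximum is attained at a unique point, and every $\mathrm{argmax}$ in Definition~\ref{definition_normal_form} is unambiguous. A short induction then shows the whole sequence is forced: $\hat{s}_1$ is the unique maximiser over $\mathcal{S}$, and once $\hat{s}_1,\ldots,\hat{s}_{j-1}$ are pinned down the set $\mathcal{S}_j$ is determined, so $\hat{s}_j$ is in turn forced. Thus the family $\{A\hat{s}_1,\ldots,A\hat{s}_n\}$ is independent of any choices.

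For the square case $n=m$, $A$ is invertible, so its rows $r_1,\ldots,r_m$ are linearly independent. An admissible $S$ must satisfy $\mathrm{dim}\,\mathrm{span}(S)=n-1$, which here forces $|S|=n-1$; the only such selections are $S_i=R\setminus\{r_i\}$, and distinct indices yield distinct hyperplanes. The normal vector of $\mathrm{span}(S_i)$ is orthogonal to every row except $r_i$, so $A\hat{s}$ is a nonzero multiple of the $i$-th standard basis vector $e_i$, and the normalisation convention gives $A\hat{s}=e_i$, i.e.\ $\hat{s}=A^{-1}e_i$. I would then evaluate $\theta$ from Definition~\ref{def_1}: every $e_i$ has exactly $m-1$ zero entries, so all share $\theta^{(1)}=2^{2m-1}$, while $\theta^{(2)}=\sum_{j\neq i}2^{j-1}=(2^m-1)-2^{i-1}$ strictly decreases as $i$ grows. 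The greedy rule therefore picks $e_1$ first, then $e_2$, and so on, giving $A\hat{s}_j=e_j$ for $j=1,\ldots,m$---exactly the columns of the $m\times m$ unit matrix.

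Since the construction is at heart a greedy extraction of a basis, the only genuine danger is premature termination, and that is eliminated outright by Lemma~\ref{lemma_1}; uniqueness then follows for free from the injectivity of $\theta$. I expect the one properly computational step to be the explicit evaluation of $\theta$ on the vectors $e_i$ in the square case, which, while the main obstacle, is entirely routine.
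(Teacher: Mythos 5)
Your proof is correct and follows essentially the same route as the paper's: Lemma~\ref{lemma_1} guarantees the greedy selection never stalls, injectivity of $\theta$ on $\mathcal{S}$ gives uniqueness of each $\mathrm{argmax}$, and the square case reduces to $\hat{s}_i = A^{-1}e_i$ with the same evaluation $\theta(\hat{s}_i) = 2^{2m-1} + 2^m - 1 - 2^{i-1}$ establishing the ordering. Your write-up is somewhat more careful than the paper's in spelling out the finiteness of $\mathcal{S}$, the non-emptiness of each $\mathcal{S}_j$, and the fact that the images $A\hat{s}_1,\ldots,A\hat{s}_n$ indeed form a basis of $V$, but these are elaborations of the same argument rather than a different approach.
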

\begin{proof}
From Lemma~\ref{lemma_1} it follows that there are $n$ linearly independent
vectors $\hat{s}$. Since all $\hat{s}$ are mapped to different $\theta(\hat{s})$,
the maximisations of Definition~\ref{definition_normal_form} all have unique solutions.
Therefore, the normal form exists and is unique.
In the special case of a square matrix $A$,
there are only $m$ different selections $S$
of rows of $A$ and the corresponding vectors $\hat{s}$
must be the columns of $A^{-1}$---see the proof of Lemma~\ref{lemma_1}---and
one has
\begin{equation}
A(\hat{s}_1,\ldots,\hat{s}_m)=AA^{-1}=\mathbbm{1}_{m}.
\end{equation}
The vectors $\hat{s}_1,\ldots \hat{s}_m$ according to this equation are ordered
as in Eq.~(\ref{eq_def_normalform}) because
\begin{equation}
\theta(\hat{s}_j) = 2^{2m-1} + 2^{m}-1-2^{j-1} > \theta(\hat{s}_{j+1}).
\end{equation}
\end{proof}

\section{Example applications}\label{sec_applications}

The following subsections illustrate the use of the normal
form in examples from different areas of
physics and mathematics. All numerical computations of
the normal form have been carried out using the implementation
provided in appendix~\ref{sec_python_implementation}.

\subsection{Basis of a space orthogonal to a vector}

As a first example consider the basis in Eq.~(\ref{example_basis1}) of the vector space $V$ orthogonal
to $a = (1, 1, 1, 1, 1)^\text{T}$. In normal form the basis is given by
\begin{equation}
 \left(
 \begin{smallmatrix*}[r]
 1 & 1 & 1 & 1 \\
 -1 & 0 & 0 & 0 \\
 0 & -1 & 0 & 0 \\
 0 & 0 & -1 & 0\\
 0 & 0 & 0 & -1
 \end{smallmatrix*}
 \right),
\end{equation}
which shows the same desirable properties as the basis in Eq.~(\ref{example_basis2})
which was the motivation for the need of a normal form.
This may be generalised to an arbitrary $a \in \mathbbm{F}^m$, provided $a_i\neq 0 \,\forall i\in \{1,\ldots,m\}$.
Then the normal form is given by
\begin{equation}\label{nf_orthogonal_vector1}
 \left(
 \begin{smallmatrix*}[r]
 1 & 1 & \cdots & 1 \\
 -a_1/a_2 & 0 &  & 0 \\
 0 & -a_1/a_3 &  & 0 \\
 0 & 0 &  & 0\\
 \vdots & \vdots & & \vdots\\
 0 & 0 & \cdots & -a_1/a_m
 \end{smallmatrix*}
 \right)
\end{equation}
or, in terms of the basis vectors $v_1,\ldots,v_{m-1}$
of $\mathrm{ker} (a^\mathrm{T})$,
\begin{equation}\label{nf_orthogonal_vector2}
(v_i)_j = \begin{cases}
1 & \text{for } j=1\\
-a_1/a_j & \text{for } j=i+1\\
0 & \text{else}.
\end{cases}
\end{equation}

Also for the case of a vector space $V$ orthogonal to two vectors
$a_1$, $a_2 \in \mathbbm{F}^m$ an analytic expression for the basis in normal form
can be given. Note that the expression constructed in the following is \textit{not}
valid in all cases.
Nevertheless its derivation serves as a good illustration for the general structure
of the normal form and special cases to be taken care of.
As for the one-dimensional case, the first row of the normal form
in the general case will
consist of one entries only. Then, according to the requirement of
putting the zero entries as low as possible, also the second row
in general will be non-zero and the remainder of the $m\times m-2$-matrix
of basis vectors will be diagonal, \textit{i.e.}\
\begin{equation}\label{nf_orthogonal_2vectors1}
 \left(
 \begin{smallmatrix*}[c]
 1 & 1 & \cdots & 1 \\
 \times & \times & \cdots & \times \\
 \times & 0 &  & 0 \\
 0 & \times &  & 0 \\
 \vdots & \vdots & & \vdots\\
 0 & 0 & \cdots & \times
 \end{smallmatrix*}
 \right),
\end{equation}
where $\times$ denotes a non-zero entry. From the requirements $v_i \cdot a_1=0$
and $v_i \cdot a_2=0$ for the basis vectors $v_i$ in normal form then follows
\begin{equation}\label{nf_orthogonal_2vectors2}
(v_i)_j = \begin{cases}
1 & \text{for } j=1\\
v^{(i)} & \text{for } j=2\\
v^{(i)\prime} & \text{for } j=i+2\\
0 & \text{else}
\end{cases}
\end{equation}
with
\begin{equation}\label{nf_orthogonal_2vectors2b}
\begin{pmatrix*}
v^{(i)}\\
v^{(i)\prime}
\end{pmatrix*} =
-\begin{pmatrix*}
(a_{1})_2 & (a_{1})_{i+2}\\
(a_{2})_2 & (a_{2})_{i+2}
\end{pmatrix*}^{-1}
\begin{pmatrix*}
(a_{1})_1\\
(a_{2})_1
\end{pmatrix*}.
\end{equation}
This result has been tested numerically with real and complex
random vectors $a_1$ and $a_2$ up to dimension 20.
However, Eqs.~(\ref{nf_orthogonal_2vectors1}) and~(\ref{nf_orthogonal_2vectors2})
can only hold if the $2\times 2$-matrix in Eq.~(\ref{nf_orthogonal_2vectors2b}) is invertible
for all $i\in \{1,\ldots,m-2\}$. Moreover, in order to describe a basis, the matrix of
Eq.~(\ref{nf_orthogonal_2vectors1}) must not contain more than two vanishing (or three linearly dependent)
rows. Thus, if all $v^{(i)}$ were equal, at most one of the $v^{(i)\prime}$ could be zero.
And if the $v^{(i)}$ are not all equal, at most two of the $v^{(i)\prime}$ can be zero.
Otherwise, the normal form will not be given by Eq.~(\ref{nf_orthogonal_2vectors2})
and one has to resort to Definition~\ref{definition_normal_form} to compute the normal
form. Appendix~\ref{sec:algorithm} provides an algorithm to compute the normal form
which is valid for all cases.

\subsection{Dimensional analysis}

The main statement of dimensional analysis~\cite{Buckingham, Bridgman} can be summarised as:
If a physical quantity $q$ can be expressed as a function of other physical quantities 
$q_1,\ldots q_n$, then this function assumes the form
\begin{equation}
q = \tilde{q} \times f(a_1,\ldots,a_m),
\end{equation}
where $\tilde{q}$ is a product of powers of $q_1,\ldots,q_n$ of the physical dimension
of $q$ and $\{a_1,\ldots,a_m\}$ is a set of independent dimensionless products of
powers of $q_1,\ldots,q_n$.

As an example consider the equation of motion of a one-dimensional
harmonic oscillator
\begin{equation}\label{EOM_harm_oszi}
m \ddot{x} = -kx,\quad x(0) = x_0,\quad \dot{x}(0) = \dot{x}_0.
\end{equation}
The position $x(t)$ of the oscillator must then be a function of $t, x_0, \dot{x}_0, k$ and $m$.
Applying dimensional analysis to this problem one finds
\begin{equation}
x(t) = x_0 \times f(a_1,\ldots,a_m),
\end{equation}
where $x_0$ has been chosen as a quantity of the same dimension as $x(t)$.
A complete set of independent dimensionless products of powers $\{a_1,\ldots,a_m\}$
can be found by means of the matrix~\cite{Buckingham, Bridgman}
\begin{equation}
B = \begin{blockarray}{cccccc}
& t & x_0 & \dot{x}_0 & k & m \\
\begin{block}{l(rrrrr)}
M & 0 & 0 & 0 & 1 & 1\\
L & 0 & 1 & 1 & 0 & 0\\
T & 1 & 0 & -1 & -2 & 0\\
\end{block}
\end{blockarray}\;.
\end{equation}
The rows of $B$ encode the dimensions $M, L, T = $ mass, length, time and the columns
the five physical quantities the sought-for $x(t)$ depends on.
The matrix entries are the powers of $M, L, T$ in the physical dimensions of $t, x_0, \dot{x}_0, k, m$.
A maximal set of independent dimensionless products of powers $a_1,\ldots a_m$
can then be found by constructing the basis of $\mathrm{ker}\,B$~\cite{Buckingham, Bridgman}.
Doing so via singular value decomposition of $B$ using the NumPy package~\cite{numpy}
one finds that $\mathrm{ker}\,B$ is spanned by the columns of
\begin{equation}\label{ker_B_numpy}
 \left(
 \begin{smallmatrix*}[r]
 -0.35314643 & -0.76783678 \\
  0.64522571 & -0.11070323 \\
 -0.64522571 &  0.11070323 \\
  0.14603964 & -0.43927000 \\
 -0.14603964 &  0.43927000
 \end{smallmatrix*}
 \right).
\end{equation}
The normal form of this basis is given by
\begin{equation}\label{ker_B_nf}
 \left(
 \begin{smallmatrix*}[r]
 1 & 1\\
-1 & 0\\
 1 & 0\\
 0 & 1/2\\
 0 & -1/2
 \end{smallmatrix*}
 \right).
\end{equation}
A set of independent dimensionless products of powers of 
$t, x_0, \dot{x}_0, k, m$ is thus given by
\begin{equation}
a_1 = \frac{t \dot{x}_0}{x_0},\quad a_2 = \sqrt{\frac{k}{m}} t.
\end{equation}
According to dimensional analysis, the solution of the equation of motion
of the oscillator must therefore assume the form
\begin{equation}\label{form_solution_harmonic_oscillator}
x(t) = x_0 \times f(a_1, a_2).
\end{equation}
Indeed, the solution
\begin{equation}
\begin{split}
x(t) & = x_0 \cos(\omega t) + \frac{\dot{x}_0}{\omega} \sin(\omega t)
 = x_0 \times \left( \cos(\omega t) + \frac{t\dot{x}_0}{x_0} \frac{1}{\omega t} \sin(\omega t)\right) =\\
& = x_0 \times \left( \cos(a_2) +  \frac{a_1}{a_2}\sin(a_2) \right),
\end{split}
\end{equation}
with $\omega \equiv \sqrt{k/m}$ is of the form of Eq.~(\ref{form_solution_harmonic_oscillator}).

In this example, the normal form allows to formulate the solution of a physical
problem in a particularly simple form. The reason for this simplicity
is that in normal form the dimensionless products
of powers $a_i$ depend on as few physical quantities as possible.
The idea to use the freedom of basis choice for this purpose
has for example been applied in the symbolic regression technique
AI Feynman~\cite{AIFeynman}.

\subsection{Numerical application of Noether's theorem}\label{sec:Noether}

In its simplest form Noether's theorem~\cite{Noether} states that for every
differentiable one-parametric symmetry
transformation of the Lagrangian (to be more precise: the action integral) of a system,
there is a corresponding conserved quantity.
In the following, the case of a Lagrangian $L(x(t), \dot{x}(t))$
invariant under a one-parametric transformation $x(t) \mapsto x_\lambda(t)$ with
$x_\lambda(t)\vert_{\lambda = 0} = x(t)$ is considered. Due to the invariance
\begin{equation}
L(x_\lambda(t), \dot{x}_\lambda(t)) = L(x(t),\dot{x}(t))
\end{equation}
one finds\footnote{To simplify the notation, in this subsection sums and indices
are suppressed in those places where the notation stays unambiguous.
For example, $\frac{\partial L}{\partial x} \frac{\partial x_\lambda}{\partial \lambda}$
is shorthand for $\sum_{i=1}^p \frac{\partial L}{\partial x_i} \frac{\partial (x_\lambda)_i}{\partial \lambda}$,
where $p$ is the number of components of the vector $x$.}
\begin{equation}
\begin{split}
0 & = \frac{d}{d\lambda}\Big\vert_{\lambda=0} L(x_\lambda(t), \dot{x}_\lambda(t)) = \frac{\partial L}{\partial x} \frac{\partial x_\lambda}{\partial \lambda}\Big\vert_{\lambda=0} + 
\frac{\partial L}{\partial \dot{x}} \frac{\partial \dot{x}_\lambda}{\partial \lambda}\Big\vert_{\lambda=0}\\
& = \frac{\partial L}{\partial x} \frac{\partial x_\lambda}{\partial \lambda}\Big\vert_{\lambda=0} + 
\frac{\partial L}{\partial \dot{x}} \frac{d}{dt} \frac{\partial x_\lambda}{\partial \lambda}\Big\vert_{\lambda=0}\\
& = \underbrace{\left(\frac{\partial L}{\partial x} - \frac{d}{dt} \frac{\partial L}{\partial \dot{x}}\right)}_{0} \frac{\partial x_\lambda}{\partial \lambda}\Big\vert_{\lambda=0} + 
\frac{d}{dt}\left(\frac{\partial L}{\partial \dot{x}} \frac{\partial x_\lambda}{\partial \lambda}\right)\Big\vert_{\lambda=0}\\
& = \frac{d}{dt}\left(\frac{\partial L}{\partial \dot{x}} \frac{\partial x_\lambda}{\partial \lambda}\right)\Big\vert_{\lambda=0},
\end{split}
\end{equation}
where in the last step the Euler-Lagrange equation has been used. For an infinitesimal symmetry transformation
of the form
\begin{equation}\label{linear_symmetry_transformation}
x_\lambda(t) = x(t) + \lambda (a + Bx(t)) + O(\lambda)^2 
\end{equation}
the conserved quantity is thus given by
\begin{equation}\label{conserved_quantity}
\frac{\partial L}{\partial\dot{x}} (a+B x(t)).
\end{equation}
In the following, the normal form for bases will be shown to provide an
elegant way to numerically apply Noether's theorem to symmetry transformations
of this form.

To find all symmetry transformations of the form of Eq.~(\ref{linear_symmetry_transformation}),
expand $L(x_\lambda(t), \dot{x}_\lambda(t))$ into a Taylor series around $\lambda = 0$.
\begin{equation}
L(x_\lambda, \dot{x}_\lambda) = L(x, \dot{x}) +
\lambda \left(\frac{\partial L}{\partial x}(a+Bx) + \frac{\partial L}{\partial \dot{x}} B\dot{x} \right) + O(\lambda^2).
\end{equation}
The transformation is a symmetry transformation if
\begin{equation}\label{symmetry_condition}
\frac{\partial L}{\partial x}(a+Bx) + \frac{\partial L}{\partial \dot{x}} B\dot{x} = 0 \quad \forall x, \dot{x}.
\end{equation}
This provides a set of (infinitely many) linear equations for $a$ and $B$.
Let the coordinate vector $x$ have $p$ components, i.e.\ $x\in \mathbbm{R}^p$.
Then, defining
\begin{equation}\label{def_DL}
\mathcal{D} L \equiv
\begin{pmatrix*}
\frac{\partial L}{\partial x_1}, \ldots,
\frac{\partial L}{\partial x_p},
\frac{\partial L}{\partial x_1} x_1 + 
\frac{\partial L}{\partial \dot{x}_1} \dot{x}_1, 
\frac{\partial L}{\partial x_1} x_2 + 
\frac{\partial L}{\partial \dot{x}_1} \dot{x}_2, \ldots
\frac{\partial L}{\partial x_p} x_p + 
\frac{\partial L}{\partial \dot{x}_p} \dot{x}_p
\end{pmatrix*},
\end{equation}
and
\begin{equation}
A \equiv
\begin{pmatrix*}
a_1,\ldots, a_p, B_{11}, B_{12}, \ldots B_{pp}
\end{pmatrix*}^\mathrm{T},
\end{equation}
the equations~(\ref{symmetry_condition}) can be written as
\begin{equation}
\mathcal{D}L(x,\dot{x}) A = 0 \quad \forall x, \dot{x}.
\end{equation}
The possible values for $A$ can thus be found by constructing a basis of the
common kernel
\begin{equation}\label{eq_common_kernel}
\bigcap_{x, \dot{x}}\; \mathrm{ker}\,\mathcal{D}L(x,\dot{x}).
\end{equation}
This common kernel can be found numerically
by choosing $\nu$ random values for $x$ and $\dot{x}$.
Combining the obtained random $\mathcal{D}L(x,\dot{x})$
to a $\nu \times (p+p^2)$-matrix,
a basis of the common kernel is given by a basis of the kernel
of the matrix.
If there are symmetry transformations of the proposed form,
this procedure for a sufficiently high $\nu$ will lead to a non-trivial
common kernel valid for all $x$ and $\dot{x}$.

As an example consider the two-body problem in three dimensions
with a $1/r$ potential:
\begin{equation}
L(\vec{x}_1, \vec{x}_2, \dot{\vec{x}}_1, \dot{\vec{x}}_2) =
\frac{1}{2}m_1 \dot{\vec{x}}_1^2 +
\frac{1}{2}m_2 \dot{\vec{x}}_2^2 +
\frac{\alpha}{|\vec{x}_1-\vec{x}_2|}.
\end{equation}
With $\vec{r}\equiv \vec{x}_1-\vec{x}_2$ and $r\equiv |\vec{r}|$, the derivatives needed for Eq.~(\ref{def_DL})
are
\begin{equation}
\frac{\partial L}{\partial (\vec{x}_1)_i} = -\frac{\alpha r_i}{r^3}, \quad
\frac{\partial L}{\partial (\vec{x}_2)_i} = +\frac{\alpha r_i}{r^3}, \quad
\frac{\partial L}{\partial (\dot{\vec{x}}_1)_i} = m_1 (\dot{\vec{x}}_1)_i, \quad
\frac{\partial L}{\partial (\dot{\vec{x}}_2)_i} = m_2 (\dot{\vec{x}}_2)_i
\end{equation}
for $i=1,2,3$.
The vector $\mathcal{D}L$ in this example has dimension $6+6^2=42$. Assigning $m_1, m_2, \alpha$ fixed numerical values
$\in [0, 1]$ (the exact values are not relevant, however, $m_1\neq m_2$ is important) and
subsequently assigning $\vec{x}_1$, $\vec{x}_2$, $\dot{\vec{x}}_1$, $\dot{\vec{x}}_2$ different
random values\footnote{In the numerical experiment $\nu=100$ random values
were used. This was sufficient to obtain the kernel valid for all
$x, \dot{x}$.} in $[0, 1]^3$, and computing the common kernel of all resulting values for $\mathcal{D}L$,
a nine-dimensional common kernel is found. Numerically, the basis of this kernel is given
by a $42\times 9$-matrix. In the performed numerical experiment, 12 of the 42 rows of this matrix
vanished, the others, however, were densely populated with non-zero entries.
The normal form of the basis is given by a $42\times 9$-matrix with only
36 non-zero entries. The found basis and its normal form are visualised in
Fig.~\ref{fig_basis_nf_noether}.
\begin{figure}
\begin{center}
\includegraphics[height=0.3\textheight]{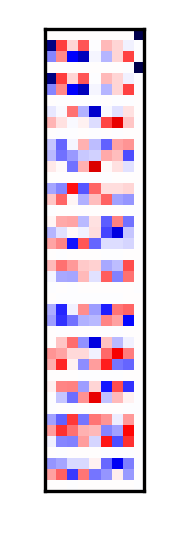}
\quad
\includegraphics[height=0.3\textheight]{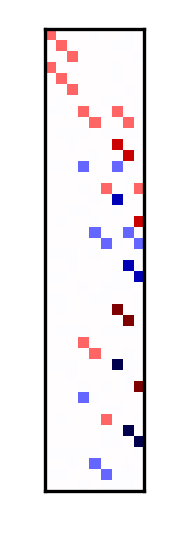}
\end{center}
\caption{The numerically found basis of the common kernel~Eq.~(\ref{eq_common_kernel}) (left)
and its normal form (right). The coloured fields indicate the values of the
matrix entries, white representing a vanishing entry.}\label{fig_basis_nf_noether}
\end{figure}

In terms of $a$ and $B$, the nine basis vectors in normal form are given by
\begin{subequations}\label{kernel_basis_noether}
\begin{align}
& a_1 = \begin{pmatrix*}
1 & 0 & 0 & 1 & 0 & 0
\end{pmatrix*}^\mathrm{T}, \quad
B_1 = 0, \\
& a_2 = \begin{pmatrix*}
0 & 1 & 0 & 0 & 1 & 0
\end{pmatrix*}^\mathrm{T}, \quad
B_2 = 0, \\
& a_3 = \begin{pmatrix*}
0 & 0 & 1 & 0 & 0 & 1
\end{pmatrix*}^\mathrm{T}, \quad
B_3 = 0, \\
& a_4 = 0, \quad
B_4 = \begin{pmatrix*}
T_3 & 0\\
0 & T_3
\end{pmatrix*}, \\
& a_5 = 0, \quad
B_5 = \begin{pmatrix*}
T_2 & 0\\
0 & T_2
\end{pmatrix*}, \\
& a_6 = 0, \quad
B_6 = \begin{pmatrix*}
T_1 & 0\\
0 & T_1
\end{pmatrix*}, \\
& a_7 = 0, \quad
B_7 = \begin{pmatrix*}
T_3 & \beta T_3\\
(1+\beta)T_3 & 0
\end{pmatrix*}, \label{eq:a7}\\
& a_8 = 0, \quad
B_8 = \begin{pmatrix*}
T_2 & \beta T_2\\
(1+\beta)T_2 & 0
\end{pmatrix*}, \label{eq:a8}\\
& a_9 = 0, \quad
B_9 = \begin{pmatrix*}
T_1 & \beta T_1\\
(1+\beta)T_1 & 0
\end{pmatrix*}, \label{eq:a9}
\end{align}
\end{subequations}
where
\begin{equation}\label{eq:def_beta}
\beta \equiv \frac{m_2}{m_1-m_2} = \frac{1}{\frac{m_1}{m_2}-1}
\end{equation}
and
\begin{equation}
T_1 = \begin{pmatrix*}
0 & 0 & 0\\
0 & 0 & 1\\
0 & -1 & 0
\end{pmatrix*}, \quad
T_2 = \begin{pmatrix*}
0 & 0 & 1\\
0 & 0 & 0\\
-1 & 0 & 0
\end{pmatrix*}, \quad
T_3 = \begin{pmatrix*}
0 & 1 & 0\\
-1 & 0 & 0\\
0 & 0 & 0
\end{pmatrix*}
\end{equation}
are the generators of infinitesimal rotations about the three axes.
Note that in the numerical experiment described above, $\beta$ is of course represented by a specific floating point number.
From dimensional analysis ($B$ is dimensionless) and repeating the experiment for different values
assigned to $m_1$ and $m_2$ the functional form Eq.~(\ref{eq:def_beta}) of $\beta$
can be inferred.\footnote{Remark: Eqs.~(\ref{kernel_basis_noether}) are valid only for $m_1\neq m_2$. For $m_1=m_2$ the
basis vectors 1 to 3 remain unchanged, and the remaining six vectors are given by $a=0$ and
$\left(\begin{smallmatrix*}
0 & T_3\\
T_3 & 0
\end{smallmatrix*}\right)$, $B_4$,
$\left(\begin{smallmatrix*}
0 & T_2\\
T_2 & 0
\end{smallmatrix*}\right)$,
$\left(\begin{smallmatrix*}
0 & T_1\\
T_1 & 0
\end{smallmatrix*}\right)$, $B_5$, $B_6$.}

The physical interpretation of six of these nine symmetry transformations is straight forward.
The first three correspond to translations in the directions of the three axes in space
and the corresponding conserved quantities, cf.~Eq.~(\ref{conserved_quantity}), are the components of the total momentum
$\vec{p} = m_1 \dot{\vec{x}}_1 + m_2 \dot{\vec{x}}_2$. The symmetry transformations number
4 to 6 correspond to rotations about the three axes through the origin, resulting
in conservation of the total angular
momentum $\vec{L} = m_1 \vec{x}_1\times \dot{\vec{x}}_1 + m_2 \vec{x}_2\times \dot{\vec{x}}_2$.

The remaining three symmetry transformations 7 to 9 are unusual in the sense that they \textit{mix}
the coordinates of the two bodies, \textit{i.e.}\ the matrices $B_7$ to $B_9$
are not block-diagonal. They can, however, be simultaneously block-diagonalised by
changing to center-of-mass and relative coordinates, \textit{i.e.}
\begin{equation}
\begin{pmatrix*}
\vec{x}_\text{CM}\\
\vec{r}
\end{pmatrix*} \equiv 
\begin{pmatrix*}
\frac{m_1}{m_1+m_2}\vec{x}_1 +\frac{m_2}{m_1+m_2}\vec{x}_2\\
\vec{x}_1-\vec{x}_2
\end{pmatrix*} =
\begin{pmatrix*}
\frac{1+\beta}{1+2\beta}\mathbbm{1} & \frac{\beta}{1+2\beta}\mathbbm{1}\\
\mathbbm{1} & -\mathbbm{1}
\end{pmatrix*}
\begin{pmatrix*}
\vec{x}_1\\
\vec{x}_2
\end{pmatrix*}.
\end{equation}
In these new coordinates, the matrices $B_7$ to $B_9$ become
\begin{equation}\label{eq:transformation_cm_r}
\begin{pmatrix*}
(1+\beta)T & 0\\
0 & -\beta T
\end{pmatrix*}\quad\text{with}\quad T=T_3, T_2, T_1.
\end{equation}
Thus, the corresponding symmetry transformations are infinitesimal rotations
of the center of mass $\vec{x}_\text{CM}$ about the origin and a simultaneous
rotation (in general about a different angle) of $\vec{r}$.
This can be seen graphically in Fig.~\ref{fig_orbit_noether}, where orbits
of the finite transformation $\lambda \mapsto \exp(\lambda B_7) x_0$
are shown.
\begin{figure}
\begin{center}
\centerline{
\includegraphics[width=0.6\textwidth]{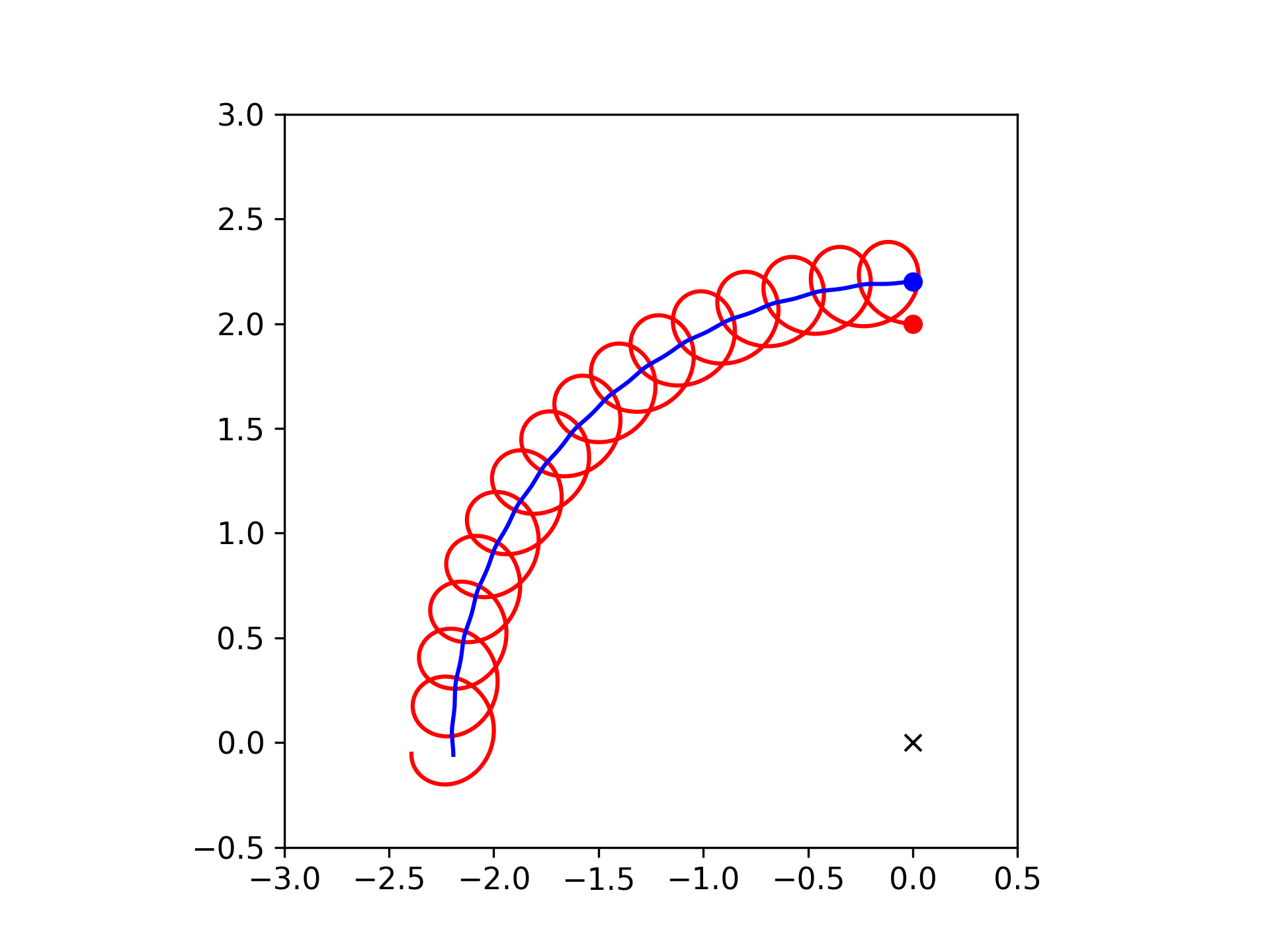}
\hspace*{-2cm}
\includegraphics[width=0.6\textwidth]{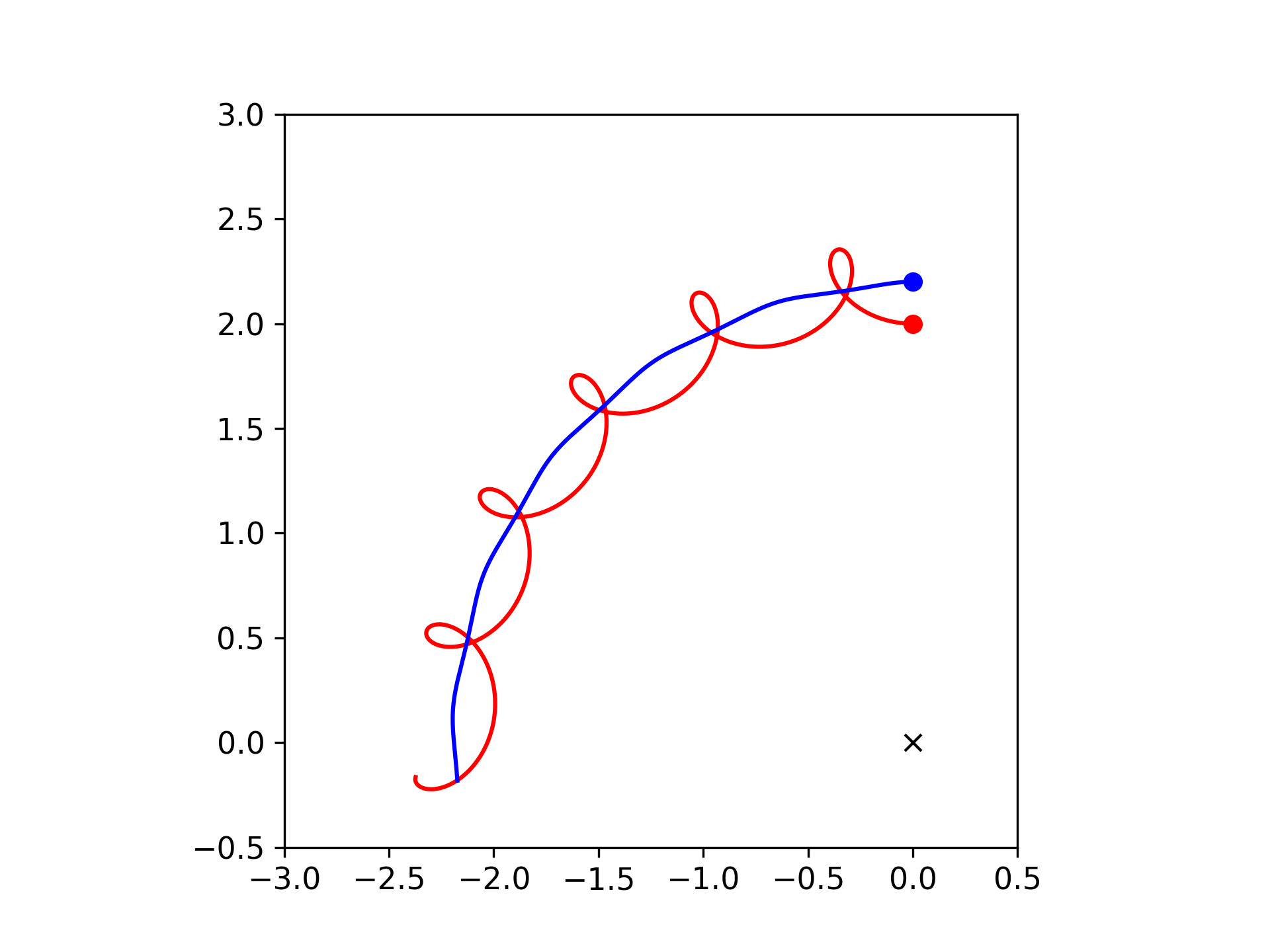}
}
\end{center}
\caption{Orbits $\lambda \mapsto \exp(\lambda B_7) x_0$ for the symmetry transformation
of Eq.~(\ref{eq:a7}) for $m_2/m_1=60$ (left) and $m_2/m_1=20$ (right). The red and blue lines are the orbits of
bodies 1 and 2, respectively. The initial positions $x_0$
of the two bodies are shown as bold points. The length units are arbitrary and
the origin is indicated by a black cross.}\label{fig_orbit_noether}
\end{figure}

Inserting Eqs.~(\ref{eq:a7}), (\ref{eq:a8}) and
(\ref{eq:a9}) into Eq.~(\ref{conserved_quantity}),
the conserved quantities
\begin{equation}
m_1 \dot{\vec{x}}_1 \times \vec{x}_1 + \frac{m_1 m_2}{m_1-m_2}
(\dot{\vec{x}}_1 \times \vec{x}_2 + \dot{\vec{x}}_2 \times \vec{x}_1)
\end{equation}
are found. Their physical interpretation is not evident on first sight.
Eq.~(\ref{eq:transformation_cm_r}), however, suggests that they are
proportional to $(1+\beta) \vec{L}_\mathrm{CM} -\beta \vec{L}_r$,
where $\vec{L}_\mathrm{CM}$ and $\vec{L}_r$
are the components of the angular momentum
associated with the motion of the center of mass
and the relative motion of the two bodies about
the center of mass, respectively.
Indeed, defining
\begin{equation}
\vec{L}_\mathrm{CM} \equiv (m_1+m_2)\,\vec{x}_\mathrm{CM} \times \dot{\vec{x}}_\mathrm{CM},
\quad
\vec{L}_r \equiv \frac{m_1 m_2}{m_1+m_2} \vec{r}\times \dot{\vec{r}}
\end{equation}
one finds
\begin{equation}
m_1 \dot{\vec{x}}_1 \times \vec{x}_1 + \frac{m_1 m_2}{m_1-m_2}
(\dot{\vec{x}}_1 \times \vec{x}_2 + \dot{\vec{x}}_2 \times \vec{x}_1) =
-((1+\beta) \vec{L}_\mathrm{CM} -\beta \vec{L}_r)
\end{equation}
in perfect agreement with Eq.~(\ref{eq:transformation_cm_r}). Since the total angular momentum
$\vec{L} = \vec{L}_\mathrm{CM} + \vec{L}_r$
is conserved too, this implies that $\vec{L}_\mathrm{CM}$ and $\vec{L}_r$ are
conserved \textit{separately}. $\dot{\vec{L}}_r=0$ implies that the
relative motion always happens in the same plane
orthogonal to $\vec{L}_r$.

The key message of this example is that the normal form here
allows to transform a merely numerically generated basis
into a structured form fostering physical interpretation of the numerical results.
Note that starting from standard Cartesian coordinates, the basis in normal
form immediately reveals momentum and angular momentum conservation and,
via the idea to block-diagonalise the matrices $B_7$ to $B_9$, a posteriori \textit{suggests}
the introduction of relative and center-of-mass coordinates.

\subsection{Linear regression}

Consider a linear model
\begin{equation}
y = a^\mathrm{T} x + b
\end{equation}
with model parameters $a\in \mathbbm{R}^p$, $b\in \mathbbm{R}$
and data points $(x, y)\in \mathbbm{R}^p \times \mathbbm{R}$.
W.l.o.g.\ $b$ can be set to zero, as the model may always be reformulated
as
\begin{equation}
y = (a^\mathrm{T}, b)
\begin{pmatrix*}
x\\
1
\end{pmatrix*} \equiv \tilde{a}^\mathrm{T} \tilde{x}.
\end{equation}
Therefore, in the following the simpler model
\begin{equation}
y = a^\mathrm{T} x
\end{equation}
is studied. The standard regression problem is: Given $N$ data points
$(x^{(k)}, y^{(k)})$, $k=1,\ldots,N$, find a best-fit estimate $\hat{a}$
for the model parameters $a$.
This can be done by a least-squares fit:
\begin{equation}
\hat{a} = \underset{a}{\mathrm{argmin}} \sum_{k=1}^N \left( y^{(k)} - a^\mathrm{T} x^{(k)} \right)^2.
\end{equation}
The solution of the above minimisation problem can be found by means of differentiation
w.r.t.\ $a$ and is given by
\begin{equation}\label{eq:hat_a}
\hat{a} = \sum_{k=1}^N \left( \sum_{j=1}^N x^{(j)} x^{(j)\mathrm{T}} \right)^{-1} x^{(k)}  y^{(k)}.
\end{equation}
Collecting the $x^{(k)}$ and $y^{(k)}$ in a $p\times N$-matrix $X$
and an $N\times 1$ vector $Y$,
\begin{equation}
X \equiv (x^{(1)}, \ldots, x^{(N)}), \quad
Y \equiv \begin{pmatrix*}
y^{(1)}\\
\vdots\\
y^{(N)}
\end{pmatrix*},
\end{equation}
Eq.~(\ref{eq:hat_a}) may be rewritten as
\begin{equation}\label{eq:hat_a_pseudoinverse}
\hat{a} = (XX^\mathrm{T})^{-1}X Y.
\end{equation}
This is the well-known result that the least-squares estimate
for the model parameters is given by the Moore-Penrose
pseudo-inverse~\cite{Moore, Bjerhammar, Penrose}
$(XX^\mathrm{T})^{-1}X$ of the input data $X$ times
the output data $Y$.

What is particularly interesting about the solution
in the form of Eq.~(\ref{eq:hat_a_pseudoinverse})
is that it shows that, in many practical cases, a large part
of information contained in the output data $Y$ does
\textit{not contribute at all} to the best-fit value for $a$,
and thus to the generated linear model.
Namely, if the number of data points $N$ exceeds
the number of parameters $p$, in general
\begin{equation}
\mathrm{dim}\, \mathrm{ker}\, (XX^\mathrm{T})^{-1}X = \mathrm{dim}\, \mathrm{ker}\,X = N-p > 0
\end{equation}
will hold. This means that the projection of $Y$
onto $\mathrm{ker}\, X$ will not contribute to
$\hat{a}$.
This is a direct consequence of the linearity of
the model. Since $\hat{a}$ is linear in $Y$,
if $N$ is much larger than $p$, a large
subspace of the space of values for $Y$ must
be mapped to zero by the operator $(XX^\mathrm{T})^{-1}X$.

This issue may be illustrated best by a one-dimensional example.
Consider linear regression for $N$ data points
$(x^{(k)}, y^{(k)}) \in \mathbbm{R}\times \mathbbm{R}$, $k\in\{1,\ldots,N\}$.
The above discussion shows that there are $N-p=N-1$ degrees of freedom
in the output data $Y$ which do not contribute to the best-fit value
$\hat{a}$. These can be parameterised by a basis of $\mathrm{ker}\,X$ which,
assuming $x^{(j)}\neq 0\, \forall j\in \{1,\ldots,N\}$,
in normal form is given by
Eq.~(\ref{nf_orthogonal_vector2}), \textit{i.e.}\
\begin{equation}
(v_i)_j = \begin{cases}
1 & \text{for } j=1\\
-x^{(1)}/x^{(j)} & \text{for } j=i+1,\\
0 & \text{else}
\end{cases}
\end{equation}
for $i\in\{1,\ldots,N-1\}$.
In words this may be stated as: If $y^{(1)}$ is increased by $\Delta y^{(1)}$
and $y^{(k)}$ simultaneously is decreased by
$\Delta y^{(1)}\, x^{(1)}/x^{(k)}$,
the best-fit estimate $\hat{a}$ will remain unchanged.
In symbols:
\begin{equation}\label{eq:Deltay1}
\Delta y^{(1)} x^{(1)} = -\Delta y^{(k)} x^{(k)} \Rightarrow \Delta \hat{a}=0.
\end{equation}
This statement can be generalised to any pair of data points:
Subtracting the basis vectors $v_i$ and $v_k$ in normal form yields
\begin{equation}
(v_i-v_k)_j = \begin{cases}
-x^{(1)}/x^{(j)} & \text{for } j=i+1,\\
+x^{(1)}/x^{(j)} & \text{for } j=k+1,\\
0 & \text{else}
\end{cases}
\end{equation}
from which, after a renaming of indices, the following analogue of Eq.~(\ref{eq:Deltay1})
arises:
\begin{equation}\label{eq:Deltayj}
\Delta y^{(j)} x^{(j)} = -\Delta y^{(k)} x^{(k)} \Rightarrow \Delta \hat{a}=0.
\end{equation}
This is remindful of the \textit{law of the lever}
of classical mechanics. And this is not a mere coincidence.
Consider a lever in the $(x, y)$-plane, supported at $(x, y)=0$.
Its position shall be on the line $y=ax$.
Moreover, assume that forces
\begin{equation}
\vec{F}_i = \begin{pmatrix}
0\\
y^{(i)} - a x^{(i)}\\
0
\end{pmatrix}
\end{equation}
act on it at positions
\begin{equation}
\vec{x}_i = \begin{pmatrix}
x^{(i)}\\
a x^{(i)}\\
0
\end{pmatrix}.
\end{equation}
That is, the farther
away the lever from the point $(x^{(i)}, y^{(i)}, 0)$, the stronger the force on
$\vec{x}_i$ will be. This is illustrated in Fig.~\ref{fig_lever}.
\begin{figure}
\begin{center}
\includegraphics[width=0.6\textwidth]{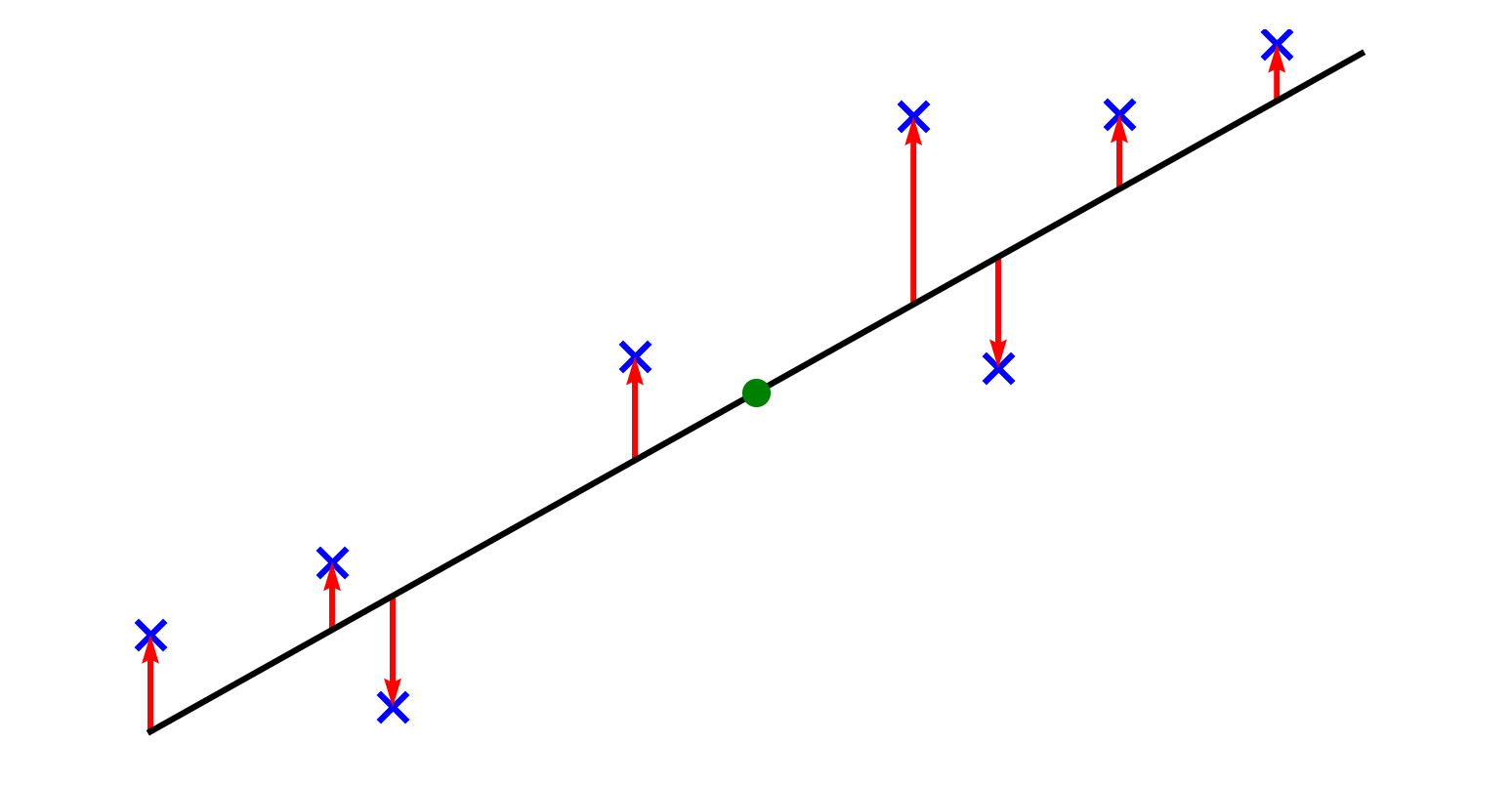}
\end{center}
\caption{The mechanical analogy to a one-dimensional least-squares fit. The forces on the lever (red vectors)
point from the lever positions $(x^{(i)}, a x^{(i)})$
to the data points $(x^{(i)}, y^{(i)})$. The support of the
lever (the green point) is at the origin.}\label{fig_lever}
\end{figure}
The total torque on the lever is given by
\begin{equation}
\vec{\tau} = \sum_{i=1}^N \vec{x}_i \times \vec{F}_i = 
\begin{pmatrix}
0\\
0\\
\sum_{i=1}^N x^{(i)}(y^{(i)}-ax^{(i)})
\end{pmatrix}.
\end{equation}
In mechanical equilibrium the lever will assume a position of zero
torque, \textit{i.e.}
\begin{equation}
\vec{\tau} = 0 \Leftrightarrow \sum_{i=1}^N x^{(i)}(y^{(i)}-ax^{(i)})
= XY-aXX^\mathrm{T} = 0.
\end{equation}
The position, described by the lever's slope $a$, in equilibrium is thus given by
\begin{equation}
a_\mathrm{eq} = (XX^\mathrm{T})^{-1} XY,
\end{equation}
which coincides with best-fit solution $\hat{a}$ for the regression problem.
The one-dimensional regression problem therefore behaves like a mechanical
lever.

The power of the normal form in this example is the
as-strong-as-possible decoupling of the involved quantities (the positions $x^{(j)}$)
in the basis vectors of the space of outputs leaving $\hat{a}$ unchanged.
Breaking the changes of $y$ down to changes of two outputs only allowed
to formulate Eqs.~(\ref{eq:Deltay1}) and~(\ref{eq:Deltayj}) and led
to the discovery of a simple mechanical analogy sharing some aspects of the
mathematical problem.

The normal form can also give interesting insights into regression
problems in higher dimensions.
A mathematical problem related to linear regression is the estimation
of gradients of functions by finite differences.
Suppose the values $f(x)$ of a function at positions
$x^{(i)}\in\mathbbm{R}^2$ are given and the goal is to estimate the gradient
of $f$ at $x^{(0)}$. This can be done by means of a linear model
\begin{equation}
f(x) \approx f(x^{(0)}) + \nabla f\vert_{x^{(0)}}^\mathrm{T} (x-x^{(0)}).
\end{equation}
The best-fit estimate based on this model is given by Eq.~(\ref{eq:hat_a_pseudoinverse}),
\textit{i.e.}\
\begin{equation}
\widehat{\nabla f}\vert_{x^{(0)}} = (XX^\mathrm{T})^{-1}X Y,
\end{equation}
where
\begin{equation}
X=(x^{(1)}-x^{(0)},\ldots, x^{(N)}-x^{(0)}) \quad \text{and}\quad
Y = \begin{pmatrix*}
f(x^{(1)})-f(x^{(0)})\\
\vdots\\
f(x^{(N)})-f(x^{(0)})
\end{pmatrix*}.
\end{equation}
The directions in the space of values of $f$ which do not contribute
to the estimate of $\nabla f$ can thus be parameterised by a basis of $\mathrm{ker}\,X$.
It is interesting to apply this to finite difference schemes for estimating the gradient.
For the forward differences
\begin{equation}
\frac{\partial f}{\partial x_i}\vert_{x^{(0)}} \approx \frac{1}{\delta} \left( f(x^{(0)} + \delta e_i) - f(x^{(0)}) \right), \quad i\in\{1, 2\}
\end{equation}
with $e_i$ denoting the unit vector in $i$-direction one has
$x^{(1)}=x^{(0)}+\delta e_1$, $x^{(2)}=x^{(0)}+\delta e_2$.
Thus,
\begin{equation}
X = \begin{pmatrix*}
\delta & 0\\
0 & \delta
\end{pmatrix*}
\end{equation}
and $\mathrm{ker}\,X=\{0\}$. This reflects the fact that all three function evaluations
at $x^{(0)}$, $x^{(1)}$ and $x^{(2)}$
are needed to estimate the gradient using the forward difference. The same holds true
for the backward difference too.
A more interesting case is given by the finite difference scheme
illustrated in Fig.~\ref{fig_finite_differences}.
\begin{figure}
\begin{center}
\includegraphics[width=0.5\textwidth]{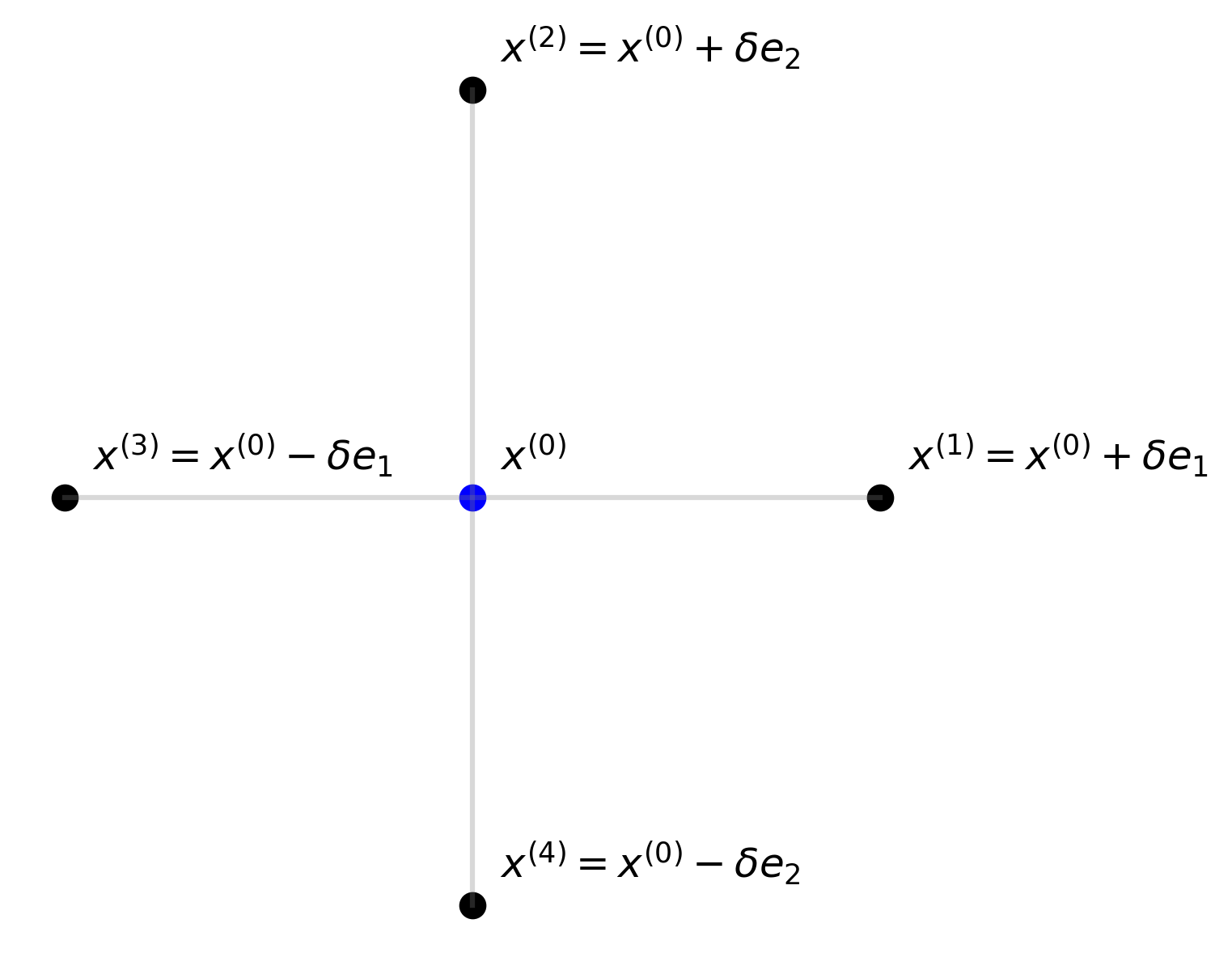}
\end{center}
\caption{A finite difference scheme based on five function evaluations.}\label{fig_finite_differences}
\end{figure}
For this case
\begin{equation}
X = \delta \begin{pmatrix*}[r]
1 & 0 & -1 & 0\\
0 & 1 & 0 & -1
\end{pmatrix*}
\end{equation}
In normal form the basis of $\mathrm{ker}\, X$ is given by
\begin{equation}
(v_1, v_2) = \begin{pmatrix*}
1 & 0\\
0 & 1\\
1 & 0\\
0 & 1
\end{pmatrix*}.
\end{equation}
From this the two independent combinations of function evaluations \textit{not} contributing
to the gradient estimate are found to be
\begin{subequations}\label{eq_finite_diff}
\begin{align}
\begin{split}
 v_1^\mathrm{T} Y & = f(x^{(1)}) - f(x^{(0)}) + f(x^{(3)}) - f(x^{(0)})\\
                  & = f(x^{(0)}+\delta e_1) - 2f(x^{(0)}) + f(x^{(0)} - \delta e_1),
\end{split}\\
\begin{split}
 v_2^\mathrm{T} Y & = f(x^{(2)}) - f(x^{(0)}) + f(x^{(4)}) - f(x^{(0)})\\
                  & = f(x^{(0)}+\delta e_2) - 2f(x^{(0)}) + f(x^{(0)} - \delta e_2).
\end{split}
\end{align}
\end{subequations}
Since these linear combinations provide information going beyond the mere gradient
information, they must contain information about higher order derivatives of $f$.
Indeed, by Taylor expansion of $f$ one finds
\begin{subequations}\label{eq_finite_diff2}
\begin{align}
& \frac{1}{\delta^2} v_1^\mathrm{T} Y \xrightarrow[\delta \to 0]{} \frac{\partial^2 f}{\partial x_1^2}\vert_{x^{(0)}},\\
& \frac{1}{\delta^2} v_2^\mathrm{T} Y \xrightarrow[\delta \to 0]{} \frac{\partial^2 f}{\partial x_2^2}\vert_{x^{(0)}}.
\end{align}
\end{subequations}
That is, the two basis vectors in normal form correspond to the two second order derivatives
which can be estimated given the function evaluations of the scheme of Fig.~\ref{fig_finite_differences}.

\section{Conclusions}\label{sec_conclusions}

The purpose of the normal form for bases proposed in this paper
is to provide a technique to support the understanding of the
structure of the problem in which the basis occurs.
Sec.~\ref{sec_applications} illustrates the power
of this technique in sample applications from different areas of
physics and mathematics.

From the mathematical side, the most important properties required for the normal
form are uniqueness and a high number of zero entries in the basis vectors.
A widely used normal form for matrices which shares uniqueness and which also
maps bases to bases is the reduced row echelon form (rref)
which can be computed by Gau\ss-Jordan elimination---see textbooks
on linear algebra.
Given a basis of a finite-dimensional vector space as \textit{rows} of a matrix,
the rref fulfils
\begin{itemize}
 \item All leading entries (leftmost non-zero entries) are one.
 \item Every column with a leading entry has zeros in all other places.
 \item The zero rows, if there are any, are below all non-zero rows.
\end{itemize}
This looks similar to the requirements on the normal form proposed in this paper.
Indeed, for $A$ being a matrix of full column rank
\begin{equation}
A \mapsto \mathrm{rref}(A^\mathrm{T})^\mathrm{T},
\end{equation}
which is the \textit{reduced column echelon form} (rcef), provides a unique normal form
for the basis of the vector space spanned by the columns of $A$.
It, however, does not share the goal of having as many zero entries as possible and
in general will not lead to the highest possible number of zero entries.
As an illustration, take the basis of Eq.~(\ref{ker_B_numpy}).
Its rcef and its normal form are
\begin{equation}\label{rref_ker_dim_analysis_B}
\text{rcef: } \left(
 \begin{smallmatrix*}[r]
 1 &  0\\
 0 &  1\\
 0 & -1\\
 1/2 & 1/2\\
-1/2 & -1/2
 \end{smallmatrix*}
 \right), \quad
\text{normal form: }
 \left(
 \begin{smallmatrix*}[r]
 1 & 1\\
-1 & 0\\
 1 & 0\\
 0 & 1/2\\
 0 & -1/2
 \end{smallmatrix*}
 \right).
\end{equation}
Though both forms are simple and structured, the rcef has less zero entries than the normal form.
A case in which the rcef and the normal form strongly differ may be constructed as follows.
Suppose $A$ and $B$ are two different, e.g.\ random, invertible $n\times n$-matrices and let
the columns of the matrix
\begin{equation}
\begin{pmatrix*}
A\\
B\\
B\\
\vdots\\
B
\end{pmatrix*}
\end{equation}
denote the basis of an $n$-dimensional vector space. Then the rcef will be given by
\begin{equation}
\begin{pmatrix*}
\mathbbm{1}\\
BA^{-1}\\
BA^{-1}\\
\vdots\\
BA^{-1}
\end{pmatrix*}
\end{equation}
which will have many zero entries less than the normal form, which
must have at least as many zeros as
\begin{equation}
\begin{pmatrix*}
AB^{-1}\\
\mathbbm{1}\\
\mathbbm{1}\\
\vdots\\
\mathbbm{1}
\end{pmatrix*}.
\end{equation}
However, there is a tradeoff. The rcef is much easier to compute than
the normal form. Therefore, in cases where the normal form is too expensive to compute,
the rcef may be a good alternative.

To summarise, the normal form presented in this paper can be a powerful tool
for unveiling the structure of problems involving bases of finite-dimensional
vector spaces. An algorithm for its computation as well as an implementation
for this algorithm are given in appendices~\ref{sec:algorithm}
and~\ref{sec_python_implementation}, respectively, and the readers
are encouraged to use these to gain insight into the structure of their own problems.

\appendix

\section{An algorithm for computing the normal form}\label{sec:algorithm}

The simplest algorithm for constructing the normal form is to 
generate all
\begin{equation}
\left(\begin{matrix}
m\\
n-1
\end{matrix}\right) = \frac{m!}{(n-1)!\,(m-n+1)!}
\end{equation}
selections of $n-1$ rows of $A$, check whether the selected rows
span an $(n-1)$-dimensional space and if so, construct the
corresponding $\hat{s}$. Ordering the $\hat{s}$ then
allows to construct the normal form.
In the following this algorithm will be referred to as the
\textit{standard} algorithm. A possibility for
such an algorithm is shown in the left part of
Fig.~\ref{fig_algorithm}.
In practice, the normal form will be most interesting
in cases where there are many more zero entries achievable
in the basis vectors than the theoretical minimum of $n-1$.
If the number of achievable zeros is close to the theoretical
maximum $m-1$, an algorithm trying to construct $\hat{s}$ leading
to $m-1, m-2,\ldots$ zeros in the basis vectors may
be sufficiently faster in constructing the normal form.
Namely, using this strategy, if after finishing
the trial of all possibilities for $m-k$ zeros $n$
linearly independent $\hat{s}$ are found, the search
can be stopped, since all not yet found $\hat{s}$
will have smaller $\theta(\hat{s})$.
The number of selections to be tried
is then
\begin{equation}
\sum_{j=1}^k\left(\begin{matrix}
m\\
m-j
\end{matrix}\right).
\end{equation}
For small
$k$ this search strategy will be advantageous. Since
this algorithm searches ``from the top'' (highest possible
number of zero entries first), in the following it will be referred to
as the \textit{top-down} algorithm. It is illustrated in the
right part of Fig.~\ref{fig_algorithm}.
\begin{figure}
\begin{center}
\includegraphics[height=0.9\textheight]{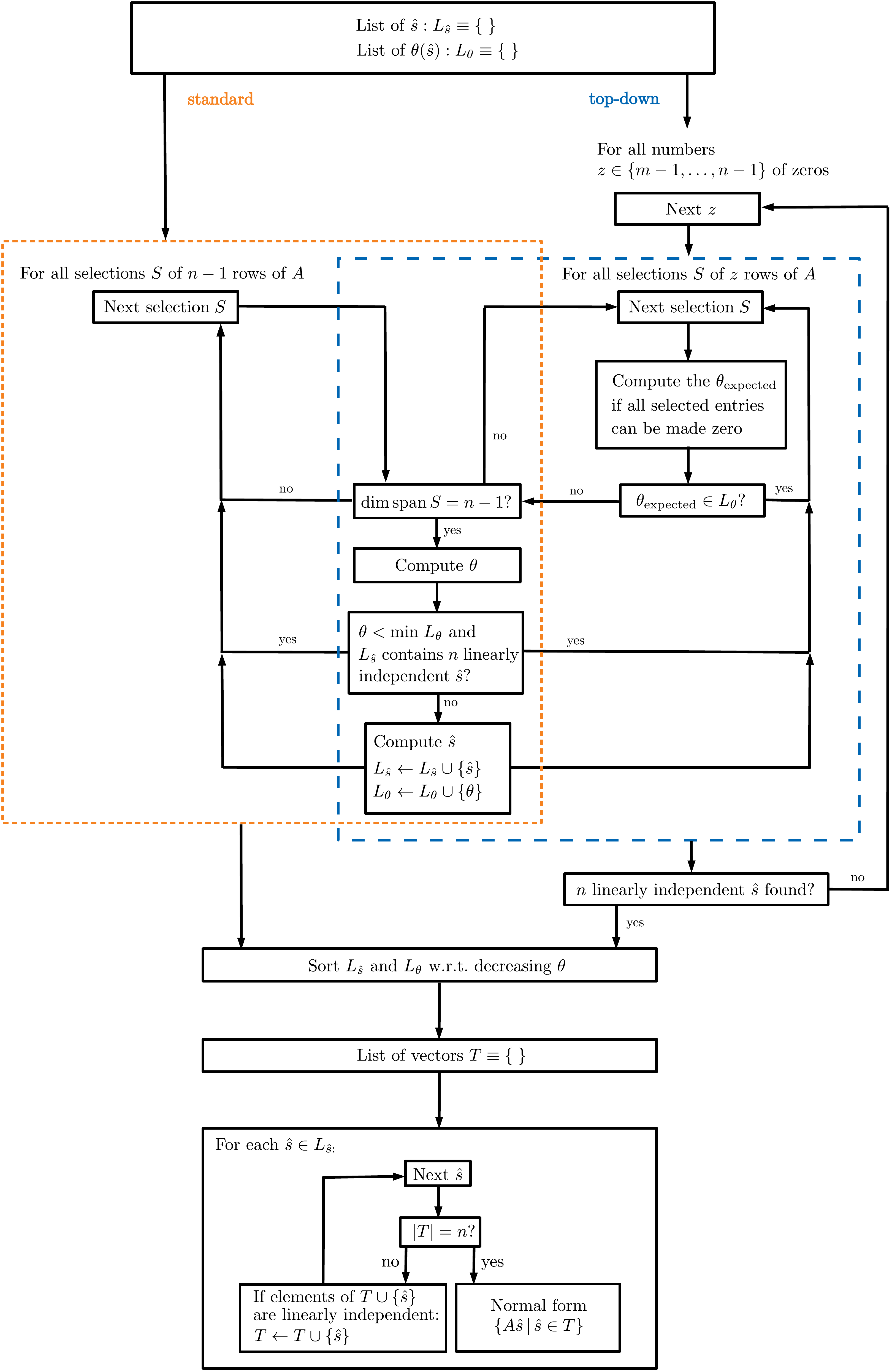}
\end{center}
\caption{An algorithm for computing the normal form. Left (fine dashed): \textit{standard} algorithm, right (dashed): \textit{top-down}
algorithm.}\label{fig_algorithm}
\end{figure}
In appendix~\ref{sec_python_implementation} a Python~\cite{python}
implementation of both algorithms is provided.

\section{A Python implementation of the normal form}\label{sec_python_implementation}

The following Python implementation allows to compute the normal form
of real and complex matrices of full column rank.
The function computing the normal form is defined in line 119 of the
source code below, and an example for its use can be found following line 133.

Computation times for random matrices of sizes
$20\times 10$ to $20 \times 19$
with the standard algorithm range from less than
a second to about 45 minutes on a typical personal computer.
This is sufficient for the examples presented in
this paper.
For matrices which have many zero entries, the top-down
algorithm will in general be much faster.
As an illustration, consider the normal form of the basis of the example
of Sec.~\ref{sec:Noether} (see also Fig.~\ref{fig_basis_nf_noether}).
The basis is described by a $42\times 9$-matrix with twelve
vanishing rows, effectively giving rise to a $30\times 9$
matrix. A very crude estimate for the running time of the
standard algorithm in this case is
\begin{equation}
t_\text{standard} \sim \left(\begin{matrix}
30\\
8
\end{matrix}\right) t_\mathrm{SVD},
\end{equation}
where $t_\mathrm{SVD}$ is the typical time needed
for one singular value decomposition in line
6 of the source code below.
The resulting normal form has at most six non-zero entries
in a column, see Eq.~(\ref{kernel_basis_noether}). Thus, for
the top-down algorithm the according estimate is
\begin{equation}
t_\text{top-down} \sim \sum_{j=1}^6\left(\begin{matrix}
30\\
30-j
\end{matrix}\right) t_\mathrm{SVD},
\end{equation}
giving the estimate
\begin{equation}
\frac{t_\text{standard}}{t_\text{top-down}} \sim 7.6.
\end{equation}
The measured times on a typical personal computer are
$t_\text{standard}=478.1\,\text{s}$ and $t_\text{top-down}=111.3\,\text{s}$,
\textit{i.e.}\ $t_\text{standard} / t_\text{top-down} \approx 4.3$.

A rigorous analysis of the asymptotic complexities
of the two presented algorithms, as well as a discussion of
possible improvements is beyond the scope of this paper.
The key message here shall be that
for moderately sized matrices (up to $m, n\lesssim 15$) and even for
larger examples like the $30\times 9$-matrix from Sec.~\ref{sec:Noether},
the presented algorithms and the provided implementation
allow computation of the normal form in seconds to minutes.

\newpage
\newgeometry{textwidth=16cm}
%
%
\begin{scriptsize}
\begin{verbatim}
The code below is licensed under the MIT license:

Copyright (c) 2023 Patrick Otto Ludl

Permission is hereby granted, free of charge, to any person obtaining a copy
of this software and associated documentation files (the "Software"), to deal
in the Software without restriction, including without limitation the rights
to use, copy, modify, merge, publish, distribute, sublicense, and/or sell
copies of the Software, and to permit persons to whom the Software is
furnished to do so, subject to the following conditions:

The above copyright notice and this permission notice shall be included in all
copies or substantial portions of the Software.

THE SOFTWARE IS PROVIDED "AS IS", WITHOUT WARRANTY OF ANY KIND, EXPRESS OR
IMPLIED, INCLUDING BUT NOT LIMITED TO THE WARRANTIES OF MERCHANTABILITY,
FITNESS FOR A PARTICULAR PURPOSE AND NONINFRINGEMENT. IN NO EVENT SHALL THE
AUTHORS OR COPYRIGHT HOLDERS BE LIABLE FOR ANY CLAIM, DAMAGES OR OTHER
LIABILITY, WHETHER IN AN ACTION OF CONTRACT, TORT OR OTHERWISE, ARISING FROM,
OUT OF OR IN CONNECTION WITH THE SOFTWARE OR THE USE OR OTHER DEALINGS IN THE
SOFTWARE.
\end{verbatim}
\end{scriptsize}
\lstinputlisting[language=Python]{normalform.py}
\restoregeometry

\newpage
\newgeometry{a4paper,textwidth=140mm,lmargin=20mm}


\begin{thebibliography}{99}

\bibitem{octave} 
 J.~W.~Eaton \textit{et al.}, \textit{GNU Octave}, www.octave.org.

\bibitem{Buckingham}
 E.~Buckingham,
 \textit{On Physically Similar Systems; Illustrations of the Use of Dimensional Equations},
 Phys.\ Rev.\ {\bf 4} (1914), 345. 

\bibitem{Bridgman}
 P.~W.~Bridgman,
 \textit{Dimensional Analysis}
 (Yale University Press, New Haven, 1922).

\bibitem{numpy}
 C.~R.~Harris, K.~J.~Millman, S.~J.~van der Walt \textit{et al.}, \textit{Array programming with NumPy},
 Nature \textbf{585} (2020), 357–362.

\bibitem{AIFeynman}
 S.~Udrescu and M.~Tegmark, \textit{AI Feynman: A physics-inspired method for symbolic regression},
 Science Advances \textbf{6} 16 (2020), eaay2631. (arXiv:1905.11481v2 [physics.comp-ph])

\bibitem{Noether}
 E.~Noether, \textit{Invariante Variationsprobleme},
 Nachrichten von der Gesellschaft der Wissenschaften zu G\"ottingen,
 Mathematisch-Physikalische Klasse \textbf{1918} (1918), 235-257.

\bibitem{Moore}
 E.~H.~Moore, \textit{On the reciprocal of the general algebraic matrix},
 Bulletin of the American Mathematical Society \textbf{26} (1920), 394–95.

\bibitem{Bjerhammar}
 A.~Bjerhammar, \textit{Application of calculus of matrices to method of
 least squares; with special references to geodetic calculations},
 Trans.\ Roy.\ Inst.\ Tech.\ Stockholm \textbf{49} (1951).

\bibitem{Penrose}
 R.~Penrose, \textit{A generalized inverse for matrices},
 Proceedings of the Cambridge Philosophical Society \textbf{51 (3)} (1955), 406–13.
 
\bibitem{python}
 G.~van~Rossum and The Python Software Foundation, \textit{Python}, www.python.org.
\end{thebibliography}
\end{document}